\def\frk{\frak}               
\def\pp{{\frk p}}
\def\mm{{\frk m}}
\def\Phi{{\frk n}}
\def\Phi{{\frk N}}
\def\opn#1#2{\def#1{\operatorname{#2}}} 
\opn\chara{char} \opn\length{\ell} \opn\pd{pd} \opn\rk{rk}
\opn\projdim{proj\,dim} \opn\injdim{inj\,dim} \opn\rank{rank}
\opn\depth{depth} \opn\sdepth{sdepth} \opn\fdepth{fdepth}
\opn\grade{grade} \opn\height{height} \opn\embdim{emb\,dim}
\opn\codim{codim}  \opn\min{min} \opn\max{max}
\opn\Tr{Tr} \opn\bigrank{big\,rank}
\opn\superheight{superheight}\opn\lcm{lcm}
\opn\trdeg{tr\,deg}
\opn\reg{reg} \opn\lreg{lreg} \opn\ini{in} \opn\lpd{lpd}
\opn\size{size}
\opn\div{div} \opn\Div{Div} \opn\cl{cl} \opn\Cl{Cl}
\opn\Spec{Spec} \opn\Supp{Supp} \opn\supp{supp} \opn\Sing{Sing}
\opn\Ass{Ass} \opn\Min{Min}
\opn\Ann{Ann} \opn\Rad{Rad} \opn\Soc{Soc}
\opn\Im{Im} \opn\Ker{Ker} \opn\Coker{Coker} \opn\Am{Am}
\opn\Hom{Hom} \opn\Tor{Tor} \opn\Ext{Ext} \opn\End{End}
\opn\Aut{Aut} \opn\id{id}  \opn\deg{deg}
\opn\nat{nat}
\opn\pff{pf}
\opn\Pf{Pf} \opn\GL{GL} \opn\SL{SL} \opn\mod{mod} \opn\ord{ord}
\opn\Gin{Gin} \opn\Hilb{Hilb}
\opn\aff{aff} \opn\con{conv} \opn\relint{relint} \opn\st{st}
\opn\lk{lk} \opn\cn{cn} \opn\core{core} \opn\vol{vol}
\opn\link{link} \opn\star{star}
\opn\gr{gr}
\def\pot#1#2{#1[\kern-0.28ex[#2]\kern-0.28ex]}
\opn\dirlim{\underrightarrow{\lim}}
\opn\inivlim{\underleftarrow{\lim}}
\let\to=\rightarrow
\def\Implies{\ifmmode\Longrightarrow \else
        \unskip${}\Longrightarrow{}$\ignorespaces\fi}
\def\implies{\ifmmode\Rightarrow \else
        \unskip${}\Rightarrow{}$\ignorespaces\fi}
\def\iff{\ifmmode\Longleftrightarrow \else
        \unskip${}\Longleftrightarrow{}$\ignorespaces\fi}
\newtheorem{Theorem}{Theorem}[]
\newtheorem{Lemma}[Theorem]{Lemma}
\newtheorem{Corollary}[Theorem]{Corollary}
\newtheorem{Proposition}[Theorem]{Proposition}
\newtheorem{Remark}[Theorem]{Remark}
\let\epsilon\varepsilon
\let\phi=\varphi
\let\kappa=\varkappa
\def\qed{\ifhmode\textqed\fi
      \ifmmode\ifinner\quad\qedsymbol\else\dispqed\fi\fi}
\def\textqed{\unskip\nobreak\penalty50
       \hskip2em\hbox{}\nobreak\hfil\qedsymbol
       \parfillskip=0pt \finalhyphendemerits=0}
\def\dispqed{\rlap{\qquad\qedsymbol}}
\opn\dis{dis}
\def\pnt{{\raise0.5mm\hbox{\large\bf.}}}
\opn\Lex{Lex}
\begin{document}
\title{  A uniform General Neron Desingularization in dimension one}

\author{Asma Khalid, Gerhard Pfister,  and Dorin Popescu }

\address{Asma Khalid, Abdus Salam School of Mathematical Sciences,GC University, Lahore, Pakistan}
\email{asmakhalid768@gmail.com}

\address{Gerhard Pfister,  Department of Mathematics, University of Kaiserslautern, Erwin-Schr\"odinger-Str., 67663 Kaiserslautern, Germany}
\email{pfister@mathematik.uni-kl.de}

\address{Dorin Popescu, Simion Stoilow Institute of Mathematics of the Romanian Academy, Research unit 5,
University of Bucharest, P.O.Box 1-764, Bucharest 014700, Romania}
\email{dorin.popescu@imar.ro}

\begin{abstract} We give a uniform General Neron Desingularization for one \linebreak dimensional local rings with respect to morphisms which coincide modulo a high power of the maximal ideal. The result has interesting applications in the case of Cohen-Macaulay rings.

 \noindent
  {\it Key words } : Smooth morphisms,  regular morphisms\\
 {\it 2010 Mathematics Subject Classification: Primary 13B40, Secondary 14B25,13H05,13J15.}
\end{abstract}

\maketitle

\vskip 0.5 cm

\section*{Introduction}

Solving equations is an old problem in mathematics. When we deal with \linebreak polynomial equations over Noetherian complete local rings then a special case is  the {\em smooth} case, that is a system where we may apply the implicit function theorem. If possible one tries   to replace an arbitrary polynomial system of equations by a smooth one. This is done in the  theorem below,  which was used for example in  Artin's Approximation Theory \cite{A} (see also \cite{P}, \cite{P2}).

Let us recall some definitions. A ring morphism $u:A\to A'$ of Noetherian rings has  {\em regular fibers} if for all prime ideals $\pp\in \Spec A$ the ring $A'/\pp A'$ is a regular  ring, i.e. its localizations are regular local rings.
It has {\em geometrically regular fibers}  if for all prime ideals $\pp\in \Spec A$ and all finite field extensions $K$ of the fraction field of $A/\pp$ the ring  $K\otimes_{A/\pp} A'/\pp A'$ is regular.

A flat morphism of Noetherian rings $u$ is {\em regular} if its fibers are geometrically regular. If $u$ is regular of finite type then $u$ is called {\em smooth}. A localization of a smooth algebra is called {\em essentially smooth}. A Henselian Noetherian local ring $A$ is {\em excellent} if the completion map $A\to \hat A$ is regular.

\begin{Theorem} (General Neron Desingularization,  Andr\'e \cite{An}, Popescu \cite{P}, \cite{P1}, Swan \cite{S}, Spivakovski \cite{Sp})\label{gnd}  Let $u:A\to A'$ be a  regular morphism of Noetherian rings and $B$ an  $A$-algebra of finite type. Then  any $A$-morphism $v:B\to A'$ factors through a smooth $A$-algebra $C$, that is $v$ is a composite $A$-morphism $B\to C\to A'$.
\end{Theorem}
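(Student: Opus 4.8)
The plan is to reduce the statement to a concrete question about the Jacobian ideal of a presentation of $B$, and then to ``desingularize'' by N\'eron's trick, which is fed by the geometric regularity of the fibres of $u$. First I would pass to the local case: since $A'$ is Noetherian it suffices to produce, for every maximal ideal $\mm'$ of $A'$, a finite-type smooth $A$-algebra $C_{\mm'}$ together with a factorization $B\to C_{\mm'}\to A'_{\mm'}$ of $v$, because finitely many such localizations cover $\Spec A'$ and these local solutions patch into a single smooth $A$-algebra by a standard argument. So assume $A'$ is local with maximal ideal $\mm'$, write $B=A[Y_1,\dots,Y_n]/I$ with $I=(f_1,\dots,f_s)$, and recall that it is enough to obtain $C$ \emph{standard smooth}, i.e.\ of the form $(A[Z_1,\dots,Z_m]/(g_1,\dots,g_r))_h$ with $h$ dividing an $r\times r$ minor of $(\partial g_i/\partial Z_j)$.

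The target is governed by the Jacobian criterion. To the chosen presentation attach the Jacobian ideal $H_{B/A}\subseteq B$, generated by the products $\Delta\cdot t$, where $\Delta$ is an $r\times r$ minor of the Jacobian matrix $(\partial f_{j_\mu}/\partial Y_{k_\nu})$ of an $r$-element subset $f_{j_1},\dots,f_{j_r}$ of the generators of $I$ and $t\in\bigl((f_{j_1},\dots,f_{j_r}):I\bigr)$; then $B$ is smooth over $A$ precisely over $\Spec B\setminus V(H_{B/A})$, and there it is even standard smooth after inverting one element. Hence everything reduces to the following: \emph{enlarge} $B$, that is, factor $v$ as $B\to B'\to A'$ through a finitely presented $A$-algebra $B'$ in such a way that $v(H_{B'/A})\not\subseteq\mm'$. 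Once this is done, $C=B'_h$ for any $h\in H_{B'/A}$ with $v(h)\notin\mm'$ completes the proof.

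Constructing such a $B'$ is the engine of the theorem and, I expect, by far the main obstacle. If $v(H_{B/A})\subseteq\mm'$ one must resolve, and this is exactly where regularity of $u$ enters: writing $\pp=u^{-1}(\mm')$, the fibre $A'/\pp A'$ is geometrically regular over $k(\pp)$, and by Cohen's structure theorem $\widehat{A'}$ carries a coefficient ring along which one can split. Geometric regularity makes it possible --- after a faithfully flat base change and a generic-flatness argument --- to change coordinates and extract from $f_1,\dots,f_s$ a subsystem that is a regular sequence carrying a Jacobian minor which is a unit modulo $\mm'$, i.e.\ a standard-smooth configuration, at least \emph{formally} near $v$. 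The hard part is to realize this modification by an \emph{honest finitely presented} $A$-algebra $B'$ with $B\to B'\to A'$ and with $v(H_{B'/A})$ strictly larger than $v(H_{B/A})$: one lifts the formal data by means of the Artin--Rees lemma and keeps track of the Jacobian ideal precisely enough that an induction --- on a numerical invariant built from the presentation and the non-smooth locus of $B$ at $\pp$ --- terminates, the base being N\'eron's original desingularization over a discrete valuation ring (and the easy case of a field), where Weierstrass preparation makes everything explicit.

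Unwinding the reductions, the local data produced this way patch to the desired smooth $A$-algebra $C$ and the factorization of $v$. I want to stress that the step of descending from the formal desingularization to a finitely presented one, and of arranging the induction so that it stops, is essentially the whole content of the theorem --- it is what makes the proofs of Popescu, Swan and Spivakovski long and delicate --- and the mixed-characteristic version of this step is the most subtle of all. An alternative route, due to Andr\'e, replaces this combinatorial-geometric argument by the homological characterization of regular morphisms through the vanishing of Andr\'e--Quillen homology $H_2$.
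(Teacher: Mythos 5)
The paper does not prove Theorem \ref{gnd}: it is quoted as a known result with references to Andr\'e, Popescu, Swan and Spivakovski, and the paper's own contribution (Theorem \ref{arc}) is a uniform, constructive variant in dimension one. So there is no internal proof to measure you against; what I can assess is whether your proposal would stand on its own, and it does not. Your outline correctly identifies the architecture of the known proofs --- reduce to showing that $v$ factors through some finitely presented $B'$ with $v(H_{B'/A})A'=A'$ (equivalently, not contained in any maximal ideal), observe that inverting an element of $H_{B'/A}$ then gives a standard smooth algebra, and run a Noetherian induction whose base case is N\'eron's desingularization over a discrete valuation ring. But the entire mathematical content lives in the step you describe in one sentence and explicitly defer: given $v(H_{B/A})\subseteq\mm'$, construct an honest finitely presented $B\to B'\to A'$ with $v(H_{B'/A})$ strictly larger. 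You gesture at ``a faithfully flat base change and a generic-flatness argument,'' ``lifting formal data by Artin--Rees,'' and ``an induction on a numerical invariant,'' but none of these is specified: which invariant, why it drops, how geometric regularity of the fibre is converted into the existence of the lift (this is where Andr\'e--Quillen homology or the N\'eron $p$-desingularization in mixed characteristic actually enters), and how one controls $((f):I)$ under the enlargement. Since you yourself note that this step ``is essentially the whole content of the theorem,'' the proposal is a roadmap to the literature rather than a proof.

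Two smaller cautions. First, your opening reduction --- produce a smooth $C_{\mm'}$ over each localization $A'_{\mm'}$ and ``patch by a standard argument'' --- is not how the published proofs proceed and is not obviously standard: gluing smooth $A$-algebras over an open cover of $\Spec A'$ into a single smooth $A$-algebra through which $v$ factors is itself a nontrivial point. The actual arguments instead perform a Noetherian induction on the closed set $V(v(H_{B/A})A')\subseteq\Spec A'$, shrinking it one minimal prime at a time until $v(H_{B/A})A'=A'$, and only then invoke the (genuinely standard) lemma that $1\in v(H_{B/A})A'$ forces factorization through a standard smooth algebra. Second, be careful with $H_{B/A}$: it is defined via all subsystems $f$ of $I$ together with the colon ideals $((f):I)$, not from a single presentation, and the enlargement step must track these colon ideals, which is one of the delicate points you have elided.
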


The $A$-algebra $C$ is called a General Neron Desingularization (shortly GND) and in the case when $(A',\mm')$ is local of dimension one it   is constructive (see \cite{AP}, \cite{PP}). The construction depends on formal power series defining $v$. We have to replace a given system of polynomials by a smooth one using only a polynomial approximation of $v$.

The purpose of this paper is to give such construction which depends only
on a solution $\bar y$  in $A'$ of $f$ modulo $\mm'^m$ for some $m>>0$ (see Theorem \ref{arc}). Moreover, the constructed GND works for each solutions $y$ of $f$ in $A'$ lifting $\bar y$. We call such GND {\em uniform}.

As a consequence (see Remark \ref{r1}) we get \cite[Theorem 14]{PP} (see also \cite[Theorem 20]{AP}), that is there exists
 a  linear map similar as in Greenberg's case (see \cite{Gr}).

An extension of \cite[Theorem 10]{P2} is  Theorem \ref{arc1}, which shows that if $A,A'$ are Cohen-Macaulay rings and $v:B\to A'/\mm'^{3k}$ an $A$-morphism, then the set of all \linebreak $A-$ morphisms $B\to A'$ equivalent with $v$ modulo $\mm'^{3k}$,  is in bijection with  $\mm^{3k}A'^s$ for some $s\in {\bf N}$.  Theorem \ref{arc1} has also a variant in the case when $(A,\mm)$ is Artinian and $(A',\mm')$ is a Noetherian local ring of dimension one, which says that for an $A$-algebra $B$  of finite type and $v:B\to A'/\mm'^{3k}$ an $A$-morphism, the set of all \linebreak $A$-morphisms $B\to A'$ equivalent with $v$ modulo $\mm'^{3k}$, is in bijection with  $\mm'^{3k}A'^s$ for some $s\in {\bf N}$ (see Theorem \ref{arc2}).

We owe thanks to the Referees who showed us several misprints and had some useful comments on our presentation.

\section{Uniform  General Neron Desingularization}

We begin recalling some definitions. Let $A$ be a Noetherian   ring,
 $B=A[Y]/I$, $Y=(Y_1,\ldots,Y_n)$. If $f=(f_1,\ldots,f_r)$, $r\leq n$ is a system of polynomials in $I$ then we can define the ideal $\Delta_f$ generated by all $r\times r$-minors of the Jacobian matrix $(\partial f_i/\partial Y_j)$.   After Elkik \cite{El} let $H_{B/A}$ be the radical of the ideal $\sum_f ((f):I)\Delta_fB$, where the sum is taken over all systems of polynomials $f$ from $I$ with $r\leq n$.
Then $B_{\pp}$, $\pp\in \Spec B$, is essentially smooth over $A$ if and only if $\pp\not \supset H_{B/A}$ by the Jacobian criterion for smoothness.
Thus  $H_{B/A}$ measures the non smooth locus of $B$ over $A$. $B$ is {\em standard smooth} over $A$ if there exists  $f$ in $I$ as above such that $B= ((f):I)\Delta_fB$.

Now let us fix some notations and assumptions for the next theorem.
 Let $u:A\to A'$ be a flat morphism of Noetherian local rings of dimension $1$ and $B,I$ be as above. Suppose that the maximal ideal $\mm$ of $A$ generates the maximal ideal of $A'$ and the completions of $A,A'$ are isomorphic. Moreover suppose that $A'$ is Henselian and $u$ is a regular morphism.
Let $(0)=\cap_{\pp\in \Ass A} Q_{\pp}$ be a reduced primary decomposition
of $(0)$ in $A$, where $Q_{\pp}$ is a primary ideal with $\sqrt{Q_{\pp}}=\pp$. Let  $e\in {\bf N}$  be such that ${\pp}^e\subset Q_{\pp}$ for all $\pp\in \Ass A$.

\begin{Theorem}\label{arc} With the notations $B,I,e$ fixed above, let  $f=(f_1,\ldots,f_r)$, $r\leq n$ be a system of polynomials in $I$, $M$ an $r\times r$-minor  of the Jacobian matrix $(\partial f_i/\partial Y_j)$ and  $k,c\in \bf N$. Suppose that there exist $N\in ((f):I)$ and an $A$-morphism $v:B\to A'/\mm^{(2e+1)k+c}A'\cong A/\mm^{(2e+1)k+c}$, given by $Y\to y'\in A^n$, such that the ideal generated by $d=(MN)(y')$ in
$A/\mm^{(2e+1)k}$ contains $\mm^k/\mm^{(2e+1)k}$. Then there exists a $B$-algebra $C$ which is standard smooth over $A$ with the following  properties:

\begin{enumerate}
\item \label{p1}
 Every $A$-morphism $v':B\to A'$ with $v'\equiv v \ \mbox{modulo}\ d^{2e+1}A'$ (that is $v'(Y)\equiv v(Y) \ \mbox{modulo}\
 d^{2e+1}A'$) factors through $C$.

\item \label{p2}
 Every $A$-morphism $v':B\to A'$ with $v'\equiv v \ \mbox{modulo}\ \mm^{(2e+1)k}A'$ factors through $C$.

 \item \label{p3}
 There exist an $A$-morphism $w:C\to A'$ which makes the following diagram commutative
 $$
  \begin{xy}\xymatrix{B \ar[d] \ar[r] & C  \ar[r]^{w} & A'\ar[d]\\
  A/\mm^{(2e+1)k+c} \ar[r] &  A/\mm^c \ar[r]   & A'/\mm^cA' }
  \end{xy}
  $$

 \end{enumerate}
\end{Theorem}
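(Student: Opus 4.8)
The plan is to run the explicit one-dimensional General N\'eron Desingularization of \cite{AP}, \cite{PP} on the data $(f,M,N,y',k,c)$, to observe that the resulting $C$ uses only $v$, and to read off (\ref{p1})--(\ref{p3}). First I would translate the Jacobian hypothesis: since $\mm^{k}$ is finitely generated and $\mm^{(2e+1)k}\subseteq\mm\mm^{k}$, the inclusion $\mm^{k}/\mm^{(2e+1)k}\subseteq dA/\mm^{(2e+1)k}$ together with Nakayama gives $\mm^{k}\subseteq dA$, hence $\mm^{jk}A'\subseteq d^{j}A'$ for all $j\geq 1$; in particular $\mm^{(2e+1)k}A'\subseteq d^{2e+1}A'$, so (\ref{p2}) is a special case of (\ref{p1}). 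As $e\geq 1$ and $v$ kills $I\supseteq(f)$ we get $f(y')\in\mm^{(2e+1)k+c}A\subseteq d^{2e+1}\mm^{c}A$, so we may fix $b\in\mm^{c}A^{r}$ with $f(y')=d^{2e+1}b$. After permuting the $Y_{j}$, assume $M=\det G$ with $G=(\partial f_{i}/\partial Y_{j})_{1\leq i,j\leq r}$, and let $G^{*}$ be the adjoint, $GG^{*}=G^{*}G=M\cdot\id$. The last preparatory point is the nilpotence bound coming from $\pp^{e}\subseteq Q_{\pp}$: for $j\geq e$ one has $(0:_{A}d^{j})=(0:_{A}d^{e})=\bigcap_{\pp\in\Ass A,\ d\notin\pp}Q_{\pp}$ and $(0:_{A}d^{e})\cap d^{e}A=(0)$, and the same holds over $A'$ since $u$ is flat, whence $(0:_{A'}d^{j})=(0:_{A}d^{j})A'$. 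This stabilization is what will make all the non-canonical choices below (of $b$, and of lifts) irrelevant.

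Next I would construct $C$. Introduce new variables $T=(T_{1},\dots,T_{n})$ and perform the N\'eron modification $Y\mapsto\bar Y(T)$ with $\bar Y_{j}(T)=y'_{j}+d^{2e}\bigl(G^{*}(y')(T'-N(y')b)\bigr)_{j}$ for $j\leq r$ (here $T'=(T_{1},\dots,T_{r})$) and $\bar Y_{j}(T)=y'_{j}+d^{2e+1}T_{j}$ for $j>r$. Using $GG^{*}=M\cdot\id$ and $f(y')=d^{2e+1}b$, the constant and linear terms cancel, and $f_{l}(\bar Y(T))$ is divisible in $A[T]$ by $M(y')d^{2e}$; put $g_{l}(T):=f_{l}(\bar Y(T))/(M(y')d^{2e})\in A[T]$, which is well defined because $b$ enters $f_{l}(\bar Y(T))$ only through the factor $d^{4e}$, hence $g_{l}$ only through $M(y')^{2e-1}N(y')^{2e}=d^{e}\cdot M(y')^{e-1}N(y')^{e}$ times $(0:_{A}d^{e})=(0)$. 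One computes $g_{l}(T)=T_{l}+N(y')\sum_{j>r}(\partial f_{l}/\partial Y_{j})(y')T_{j}+d\cdot(\hbox{polynomial in }T)$, so the $r\times r$ Jacobian minor $\delta$ of $(g_{1},\dots,g_{r})$ with respect to $T_{1},\dots,T_{r}$ satisfies $\delta\equiv 1\bmod dA[T]$. From this data I would form, following \cite{AP}, \cite{PP}, the $B$-algebra $C$ presented over $A$ by $(g_{1},\dots,g_{r})$ (together with the relations $Y=\bar Y(T)$) localized at $\delta$; the content of that step is that $C$ is standard smooth over $A$, which uses that $\delta$ is a unit and that $N\in((f):I)$ forces, via $NI\subseteq(f)$ and $f_{l}=M(y')d^{2e}g_{l}$ in $C$, the $f_{l}$-Jacobian data into the unit ideal $H_{C/A}$. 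Since all coefficients are polynomial in $y'$ and $b$, and since $\mm^{(2e+1)k}\subseteq d^{2e+1}A$ shows that $y'\bmod\mm^{(2e+1)k+c}$ determines $M(y'),N(y'),G^{*}(y')$ and $b\bmod\mm^{c}$ while the stabilization disposes of the rest, this $C$ depends only on $v$.

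For (\ref{p1}) and (\ref{p2}): let $v'\colon B\to A'$ with $v'\equiv v\bmod d^{2e+1}A'$, so $v'(Y_{j})=y'_{j}+d^{2e+1}z_{j}$ with $z\in A'^{n}$. Put $t_{i}:=N(y')\bigl((G(y')z)_{i}+b_{i}\bigr)$ for $i\leq r$ and $t_{j}:=dz_{j}$ for $j>r$; then $G^{*}G=M\cdot\id$ and $NM=d$ give $\bar Y_{j}(t)=v'(Y_{j})$ for every $j$, hence $g_{l}(t)=f_{l}(v'(Y))/(M(y')d^{2e})=0$ (exactly, by the stabilization over $A'$) and $\delta(t)=1+d(\cdots)$ is a unit of the local ring $A'$ (the case $d\in A'^{\times}$ being trivial). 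Thus $T\mapsto t$ defines an $A$-morphism $C\to A'$ restricting to $v'$ on $B$, which is (\ref{p1}); and (\ref{p2}) follows since $\mm^{(2e+1)k}A'\subseteq d^{2e+1}A'$.

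For (\ref{p3}): take $t^{0}\in A^{n}$ with $t^{0}_{i}=N(y')b_{i}$ for $i\leq r$ and $t^{0}_{j}=0$ for $j>r$; then $\bar Y(t^{0})=y'$, the minor $\delta(t^{0})=1+d(\cdots)$ is a unit modulo $\mm^{c}$, and $g_{l}(t^{0})\in\mm^{c}A$ because $f_{l}(\bar Y(t^{0}))=f_{l}(y')=M(y')d^{2e}\cdot N(y')b_{l}$ with $N(y')b_{l}\in\mm^{c}A$. So $T\mapsto t^{0}$ gives an $A$-morphism $\bar w\colon C\to A/\mm^{c}\cong A'/\mm^{c}A'$ whose restriction to $B$ is $v$ reduced modulo $\mm^{c}$. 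Since $C$ is standard smooth over $A$, $C\otimes_{A}A'$ is standard smooth over the Henselian local ring $A'$, so by the multivariate Hensel Lemma (the relevant Jacobian being the unit $\delta$) the $A'/\mm^{c}A'$-point $\bar w$ lifts to an $A'$-point $w\colon C\to A'$ with $w\equiv\bar w\bmod\mm^{c}A'$; this $w$ makes the diagram of (\ref{p3}) commute. The genuinely delicate part of the whole argument is the construction in the second paragraph --- fixing the N\'eron modification with exactly the right powers of $d$ (the "$2e$'' for the Newton correction, the extra "$+1$'' that turns the Jacobian minor into a unit after localization), verifying standard smoothness of $C$ over $A$ with $N\in((f):I)$ supplying the missing unit, and carrying out the annihilator bookkeeping through $(0:_{A}d^{j})=(0:_{A}d^{e})$ --- and it is precisely this that dictates the threshold $\mm^{(2e+1)k+c}$ and the exponent $2e+1$.
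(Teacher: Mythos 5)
Your overall strategy is the paper's: a N\'eron-type modification $Y\mapsto y'+(\mbox{power of }d)\cdot(\mbox{adjoint})\cdot T$, an auxiliary system $g$ with unit Jacobian minor, and bookkeeping via the stabilization $(0:_Ad^e)=(0:_Ad^{e+1})$. Your reduction $\mm^k\subseteq dA$, the deduction of (\ref{p2}) from (\ref{p1}), and the Hensel lifting in (\ref{p3}) all match the paper. But the two steps you yourself flag as ``the genuinely delicate part'' are exactly where your normalization breaks down, and you do not carry them out. To make $C$ a $B$-algebra which is standard smooth you must show that \emph{all} of $I$, not only $(f)$, dies after the substitution $Y=\bar Y(T)$. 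From $NI\subseteq(f)$ you only get $P(\bar Y(T))\,I(\bar Y(T))\subseteq(g)$ with $P=NM$, and $P(\bar Y(T))=d\,s'$ for some $s'\in 1+dA[T]$, which is \emph{not} invertible in $A[T]_{\delta}$; one must also localize at $s'$ (the paper's $E_{ss'}$) and then still prove $I(\bar Y(T))\in d^eU$ and $(0:_Ud)\cap d^eU=0$ for $U$ flat over $A$. As written, your $C=(A[T]/(g))_{\delta}$ need not receive a map from $B$ at all.

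More seriously, your verification of (\ref{p1}) fails precisely when $d$ is a zero-divisor, which is the whole reason for the exponent $e$. From $M(y')d^{2e}g_l(t)=f_l(v'(Y))=0$ the stabilization yields only $g_l(t)\in(0:_{A'}d^{e})$, since $(0:_{A'}M(y')d^{2e})=(0:_{A'}d^{e})$; this is not $0$ unless $d$ is $A'$-regular. The paper closes this by normalizing $g=a+T+d^{e-1}Q$ with $a\in d^e\mm^cA^r$ and by choosing $t=H(y')\epsilon\in d^eA'^n$, so that $g(t)\in d^eA'^r\cap(0:_{A'}d^{e+1})=0$. Your $t_i=N(y')\bigl((G(y')z)_i+b_i\bigr)$ lies in no such ideal (you only have $b\in\mm^cA^r$, and $z$ is itself only well defined up to $(0:_{A'}d^{2e+1})$), so the argument does not close; an additional correction of $t$ by torsion elements, or a reparametrization as in the paper, is needed. (There is also a small slip: with $\bar Y_j=y'_j+d^{2e+1}T_j$ for $j>r$ you must take $t_j=z_j$, not $dz_j$.) In the Cohen--Macaulay case, where $e=1$ and $d$ is regular, both objections disappear and your argument is essentially the paper's.
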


\begin{proof}
 We follow  the proof of \cite[Theorem 10]{P2} and
 \cite[Theorem 2]{PP}.
    Set $P=NM$.  As $A$ is not Artinian  we  see that $d$ is not nilpotent. We have
 $(0:_Ad^e)=(0:_Ad^{e+1})$ because of the choice of $e$.
Note that
 $\mm^k\subset dA+\mm^{(2e+1)k+c}\subset dA+\mm^{(2e+1)((2e+1)k+c)+c}\subset \ldots $. Thus $\mm^k\subset dA$, in particular $\mm^{(2e+1)k+c}\subset dA$.

 Note  that  $dA/\mm^{(2e+1)k+c}$ is the ideal corresponding to $v(NM)A'/\mm^{(2e+1)k+c}A'$ by the isomorphism  $A/\mm^{(2e+1)k+c}\cong A'/\mm^{(2e+1)k+c}A'$.

  We may assume that $M=\det((\partial f_i/\partial Y_j)_{i,j\in [r]})$, where $[r]=\{1,\ldots,r\}$.  Let  $H$ be the $n\times n$-matrix obtained by adding down to $(\partial f/\partial Y)$ as a border the block $(0|\mbox{Id}_{n-r})$ (we assume as above that $M$ is given by the first $r$ columns of the Jacobian matrix). Let $G'$ be the adjoint matrix of $H$ and $G=NG'$. We have
$$GH=HG=NM \mbox{Id}_n=P\mbox{Id}_n$$
and so
$$d\mbox{Id}_n=P(y')\mbox{Id}_n=G(y')H(y').$$

Let ${\bar A}=A/(d^{2e+1})$, ${\bar B}={\bar A}\otimes_AB$, ${\bar A}'=A'/(d^{2e+1}A')$. Since $\mm^k\subset dA$ we see that $v$ defines modulo $d^{2e+1}$ a map
 ${\bar v}:{\bar B}\to {\bar A'}$.

 Let
 \begin{equation}\label{h}
   h=Y-y'-d^eG(y')T
 \end{equation}
 
 where  $T=(T_1,\ldots,T_n)$ are new variables.  Thus
 \begin{equation}\label{mod_h}
   Y_l-y'_l\equiv d^eG_l(y')T\ \mbox{modulo}\ h_l
 \end{equation}
 
for $l\in [n]$, where $G_l(y')$ means the row $l$ of the matrix $G(y')$. Since
$$f(Y)-f(y')\equiv \sum_j\partial f/\partial Y_j(y') (Y_j-y'_j)$$
modulo higher order terms in $Y_j-y'_j$, by Taylor's formula we see that  we have
$$f(Y)-f(y')\equiv  \sum_jd^e\partial f/\partial Y_j(y') G_j(y')T+d^{2e}Q=$$
$$d^e P(y')T+d^{2e}Q=d^{e+1}(T+d^{e-1}Q)\
\mbox{modulo}\ h,$$
 where $Q\in T^2 A[T]^r$ and modulo $h$ means modulo $(h)A[Y,T]^r$. This is because we replace everywhere $Y_j-y'_j$ by $ d^eG_j(y')T$ modulo $h_j$ and use that
  $(\partial f/\partial Y)G=(P\mbox{Id}_r|0)$. Since $f(y')\in \mm^{(2e+1)k+c}\subset d^{2e+1}\mm^c$ we have $f(y')=d^{e+1}a$ for some $a\in d^e\mm^cA^r$. Set
$g_i=a_i+T_i+d^{e-1}Q_i$, $i\in [r]$ and  $E=A[Y,T]/(I,g,h)$. Clearly, it holds that $d^{e+1}g\subset (f,h)$ and $(f)\subset (h,g)$ .

Let $s$ be the first  $r\times r$-minor of the Jacobian matrix of $g$.  Thus $s=\det(\mbox{Id}_r+d^{e-1}(\partial Q_i/\partial T_j)_{i,j\in [r]})\in 1+(T)$ and  $U=(A[T]/(g))_{s}$ is standard smooth. In \linebreak particular, $U$ is flat over $A$ and so $(0:_Ud^e)=(0:_Ud^{e+1})$.
We claim that $E_{ss'}\cong U_{s'}$ for some $s'\in 1+(d,T)$. It will be enough to show that $I\subset (h,g)A[Y,T]_{ss'}$ \linebreak because then we get $E_{ss'}\cong A[Y,T]_{ss'}/(g,h)\cong U_{s'}$. We have $PI\subset (f)\subset (h,g)$ and so $P(y'+d^eL)I\subset (h,g)$ where $L=G(y')T$. But $P(y'+d^eL)$ has the form $ds'$ for some $s'\in 1+(d^{e-1}T)\subset 1+(d^e,T)$. It follows that $s'I\subset ((h,g):d)A[Y,T]_{s}$, that is $s'IA[Y,T]_s/(h,g)\subset (0:_{A[Y,T]_s/(h,g)}\ d)\cong (0:_Ud)$. On the other hand, $I\equiv I(y')\equiv 0$  modulo $(d^e,h)$
because $I(y')\subset \mm^{(2e+1)k}\subset (d^{2e+1})$ since $\mm^k\subset (d)$ and so $s'IA[Y,T]_s/(h,g)=0$
since $(0:_Ud^e)=(0:_Ud^{e+1})$ implies $(0:_Ud)\cap d^eU=0$. Thus $I\subset  (h,g)A[Y,T]_{ss'}$ and it follows that
 $C=E_{ss'}$ is a $B$-algebra  smooth over $A$. Note that by construction $C$ is a smooth $A$-algebra of standard form.

It remains to see that an arbitrary $A$-morphism $v':B\to A'$ with
 $v'\equiv v$\ modulo $ \mm^{(2e+1)k}$  factors through $C$. We have $v'(Y)\equiv y' \
\mbox{modulo}\ \mm^{(2e+1)k}$ and so $v'(Y)\equiv y'\
\mbox{modulo}\ d^{2e+1}$.  Thus there exists $\epsilon\in d^eA'^n$ such that $v'(Y)-y'=d^{e+1}\epsilon$.
 Then $t:=H(y')\epsilon\in d^eA'^n$
satisfies
$$G(y')t=P(y')\epsilon=d\epsilon$$
 and so
 $$v'(Y)- y'=d^eG(y')t,$$
that is $h(v'(Y),t)=0$.
Note that $d^{e+1}g(t)\in (h(v'(Y),t), f(v'(Y)))=(0)$
and $g(t)\in d^e A'^r$. It follows that $g(t)\in (0:_{A'^r}d^{e+1})\cap d^eA'^r=0 $ because $(0:_{A'}d^e)=(0:_{A'}d^{e+1})$.
 Thus $v'$ factors through $E$, that  is $v'$ is a composite map $B\to E\xrightarrow{\alpha} A'$, where $\alpha$ is a $B$-morphism given by $T\to t$.
 As $\alpha(s)\equiv 1$ modulo $d$ and $\alpha(s')\equiv 1$ modulo $(d,t)$, $t\in dA'$ we see that $\alpha(s),\alpha(s')$ are invertible because  $A'$ is local  and so $\alpha$ (thus $v'$) factors through the standard smooth $A$-algebra $C$. Thus (1)  holds. Then (2) holds from (1), because in this case we get $v'\equiv v $ modulo $d^{2e+1}$ since $\mm^k\subset dA$.

 Now  for (3) take the map ${\hat w}:C\cong U_{s'}\to A'/\mm^cA'$ given by $(Y,T)\to (y',0)$. Then the composite map $B\to C\xrightarrow{\hat w} A'/\mm^c A'$ is given by   $Y\to v(Y)\ \mbox{modulo}\ \mm^c$. Since $C$ is standard smooth, we may lift $\hat w$ to an $A$-morphism $w:C\to A'$ by the Implicit Function Theorem. Clearly, $w$ makes the above diagram commutative because $v(Y)\ \mbox{modulo}\ \mm^c$ corresponds to $w(Y)\ \mbox{modulo}\ \mm^cA'$ by the isomorphism $A/\mm^c\cong A'/\mm^cA'$.
\hfill\ \end{proof}

\begin{Remark} \label{r} {\em The number $c$ in the above theorem is necessary only in (3) and for (1) (and so (2)) we could take $c=0$ from the beginning. Since (1) and (3) have mainly the same proof, we did not want to repeat the arguments and so we unify (1), (3) in the above theorem, the disadvantage being to have $c$ not useful in (1).}
 \end{Remark}

\begin{Remark}\label{r1} {\em The third statement of the above theorem gives a variant of
\cite[Theorem 14]{PP} in the idea of Greenberg \cite{Gr}. More precisely, let $(A,\mm) $ be an excellent Henselian local ring, $A'={\hat A}$ and $B,\ I$ be as above. Suppose that $\hat y$ is a solution of $I$ in $A/\mm^{(2e+1)k}\cong {\hat A}/\mm^{(2e+1)k}{\hat A}$ such that there exist a system of polynomials $f=(f_1,\ldots,f_r)$ in $I$, an $r\times r$ minor $M$ of the Jacobian matrix $(\partial f/\partial Y)$  and $N\in ((f):I)$ such that $(MN)({\hat y}){\hat A}/ \mm^{(2e+1)k}{\hat A}\supset \mm^k{\hat A}/\mm^{(2e+1)k}{\hat A}$. As a consequence of Theorem
\ref{arc} (3), every solution $\tilde y$ of $I$ in $A/\mm^{(2e+1)k+c}$, $c\in {\bf N}$ lifting $\hat y$ can be lifted to a solution $y$ of $I$ in $A$. When $A$ is a DVR then by Greenberg \cite{Gr} there exists a linear map $\nu:{\bf N}\to {\bf N}$ such that every solution $\tilde y$ of $I$ in $A/\mm^{\nu(c)}$, $c\in {\bf N}$ can be lifted to a solution $y$ of $I$ in $A$. By \cite{PP0} and \cite{P} there exist such maps $\nu$ (not necessarily linear) for every excellent Henselian local ring $A$. 
Our above consequence gives similarly a linear map $\nu$ of a special form $c\to (2e+1)k+c$ but this is not the map given by Greenberg because here $k$ (and so $\nu$) depends on $\tilde y$.}
\end{Remark}

\begin{Remark}\label{r2}  {\em If $A$ is reduced then $e=1$ and $v:B\to A'/\mm^{3k+c}A'$. In fact, the proof goes similarly with  $v:B\to A'/\mm^{3k+c}A'$ (in (1) with $v:B\to A'/d^3A'$) if $A$ is Cohen-Macaulay, because for the proof in this case it is enough to see that $d$ is regular which follows from $\mm^k\subset (d)$.}
\end{Remark}

The following proposition follows  in particular from \cite[Theorem 2]{El}
but it is also a consequence of Theorem \ref{arc}.

\begin{Proposition} (Elkik)\label{e} Let $(A,\mm)$ be a Noetherian Henselian local ring of dimension one and $B=A[Y]/I$, $Y=(Y_1,\ldots,Y_n) $ an $A$-algbra of finite type. Then for every $k\in \bf N$ there exist two integers
$m_0,p\in \bf N$ such that if $y'\in A^n$ satisfies $\mm^k\subset H_{B/A}(y')$ and $I(y')\equiv 0$ modulo $\mm^m$ for some    $m>m_0$ then there exists $y\in A^n$ such that $I(y)=0$ and $y\equiv y'$ modulo $\mm^{m-p}$.
\end{Proposition}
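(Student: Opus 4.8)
The plan is to deduce the statement from Theorem~\ref{arc} applied with $A'=A$ and $u=\id_A$: the identity morphism is flat and regular, carries $\mm$ onto the maximal ideal of $A$, the completions trivially coincide, and $A$ is Henselian by hypothesis, so the running assumptions preceding Theorem~\ref{arc} hold (and the Implicit Function Theorem used in its statement~(3) is available since $A$ is Henselian). Fix a reduced primary decomposition $(0)=\Sect_{\pp\in\Ass A}Q_\pp$ and an integer $e\geq1$ with $\pp^e\subseteq Q_\pp$ for all $\pp\in\Ass A$, both depending only on $A$.

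The core of the proof is to turn the hypothesis $\mm^k\subseteq H_{B/A}(y')$ into that of Theorem~\ref{arc}, i.e.\ to exhibit a single system $f=(f_1,\dots,f_r)\subseteq I$ with $r\leq n$, an $r\times r$-minor $M$ of $(\partial f_i/\partial Y_j)$, an element $N\in((f):I)$, and an integer $k'$ bounded in terms of $k$ and of numerical invariants of $A$ and $B$ only, with $\mm^{k'}\subseteq(MN)(y')A+\mm^{(2e+1)k'}$. First, since $B$ is Noetherian, the (a priori infinite) sum $J=\sum_f((f):I)\Delta_fB$ is a finitely generated ideal, hence equals a finite partial sum and is generated by finitely many elements of the shape $NM$; moreover $H_{B/A}=\sqrt J$ yields $H_{B/A}^{\ell}\subseteq J$ for some $\ell$ depending only on $B$. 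Evaluating at $y'$ and using $I(y')\subseteq\mm^m$ to absorb the error made by working modulo $I$, one gets $\mm^{k\ell}\subseteq J(y')+\mm^m$; for $m$ large enough (we pin down $m_0$ below), so that $\mm^m\subseteq\mm\cdot\mm^{k\ell}$, Nakayama upgrades this to $\mm^{k\ell}\subseteq J(y')$, and hence $J(y')=\sum_j(N_jM_j)(y')A$ is $\mm$-primary. It remains to replace these finitely many generators by one element of the required Jacobian shape: a generic $\GL_n$-change of the variables $Y$ turns any $A$-linear combination of the minors of a fixed system into a single minor of the Jacobian in the new coordinates, and merging in addition the data of the finitely many systems $f$ occurring above produces a triple $(f,M,N)$ with $d:=(MN)(y')$ such that $dA$ is a minimal reduction of $J(y')$. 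Then $dA$ is again $\mm$-primary, $d$ avoids the non-maximal primes of $\Ass A$ (so $d$ is not nilpotent and $(0:_Ad^e)=(0:_Ad^{e+1})$), and $e(dA;A)=e(J(y');A)\leq e(\mm^{k\ell};A)=k\ell\,e(\mm;A)$; by the length formula in dimension one this bounds $\length(A/dA)$ in terms of $e(\mm;A)$ and $\length(H^0_{\mm}(A))$, so one may take $k'=\length(A/dA)$, since then $\mm^{k'}\subseteq dA\subseteq dA+\mm^{(2e+1)k'}$.

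Now put $p=(2e+1)k'$ and $m_0=(2e+1)\max(k',k\ell)$. Given $y'$ with $\mm^k\subseteq H_{B/A}(y')$ and $I(y')\subseteq\mm^m$ for some $m>m_0$, set $c=m-(2e+1)k'\geq1$; then $Y\mapsto y'$ is a well-defined $A$-morphism $v\colon B\to A/\mm^{(2e+1)k'+c}=A/\mm^m$ because $I(y')\subseteq\mm^m$, and by the previous paragraph the hypotheses of Theorem~\ref{arc} are met with its integer ``$k$'' taken to be $k'$. Theorem~\ref{arc} produces a $B$-algebra $C$, standard smooth over $A$, and by statement~(3) an $A$-morphism $w\colon C\to A'=A$ fitting into the commutative square there. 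The composite $B\to C\xrightarrow{w}A$ is then an $A$-algebra homomorphism, hence given by some $y\in A^n$, and $I(y)=0$ because $I$ maps to $0$ in $B$; commutativity of the square forces $y\equiv y'$ modulo $\mm^c=\mm^{m-p}$, which is exactly the assertion.

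The main obstacle is the last step of the second paragraph: extracting from the $\mm$-primary ideal $J(y')$ a single element of the precise form $(MN)(y')$ attached to one polynomial system, while keeping the bound $\mm^{k'}\subseteq(MN)(y')A$ uniform in $y'$. The uniform bound is reduction theory in dimension one as sketched; the delicate part is the Jacobian shape, which forces one to combine the smoothness data of several systems $f$ into a single chart and, when the residue field of $A$ is finite, to bypass the non-existence of generic linear combinations, e.g.\ by a faithfully flat base change $A\to A(X)$ or by exploiting that $A$ is Henselian. This is precisely the point at which, alternatively, one could simply quote \cite[Theorem~2]{El}.
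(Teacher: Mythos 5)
Your endgame coincides with the paper's proof: set $A'=A$, view $Y\mapsto y'$ as $v\colon B\to A/\mm^m$, invoke Theorem~\ref{arc}(3) with $c=m-p$, and read off $y$ from $w$. The paper does exactly this and nothing more, taking $m_0=p=(2e+1)k$ with the same $k$ as in the hypothesis; it silently treats ``$\mm^k\subset H_{B/A}(y')$'' as already supplying the data $(f,M,N)$ with $\mm^k\subseteq (MN)(y')A+\mm^{(2e+1)k}$ that Theorem~\ref{arc} actually requires. So the one place where you genuinely go beyond the paper is your second paragraph, and you are right that this is where the real content sits: $H_{B/A}$ is only the \emph{radical} of a \emph{sum} of ideals $((f):I)\Delta_fB$, so one must both kill the radical (your $H_{B/A}^{\ell}\subseteq J$ and the Nakayama step, which are fine) and compress finitely many generators $N_jM_j$ into a single element of the prescribed Jacobian shape for a single system $f$.

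That last compression is exactly the step you do not carry out. The assertion that a generic $\GL_n$-change of the $Y$'s turns an $A$-linear combination of minors of one fixed system into one minor of that system is the standard trick, but ``merging in addition the data of the finitely many systems $f$'' is not a linear-algebra statement: different systems $f$ have different lengths $r$, different ideals $((f):I)$, and there is no a priori reason a single pair $(M,N)$ realizes a reduction of $J(y')$. Declaring that the outcome $dA$ is a minimal reduction of $J(y')$, and then bounding $k'=\length(A/dA)$ uniformly in $y'$ via multiplicities, presupposes that this merged element exists; and your proposed escape hatch of ``simply quoting \cite[Theorem~2]{El}'' is circular here, since the whole point of the Proposition in this paper is to rederive Elkik's statement from Theorem~\ref{arc}. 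So: same skeleton as the paper, plus an honest attempt to fill a gap the paper leaves implicit, but the filling has a hole at its load-bearing point. Either restrict the hypothesis (as the paper effectively does) to the existence of a single triple $(f,M,N)$ with $\mm^k\subseteq (MN)(y')A$, or supply a genuine argument for the one-system, one-minor reduction.
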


\begin{proof} Suppose that $A'=A$. In the notations of Theorem \ref{arc} given $k$ set $m_0=p=(2e+1)k$ and  suppose that $y'\in A^n$ satisfies  $\mm^k\subset H_{B/A}(y')$ and $I(y')\equiv 0$ modulo $\mm^m$ for some    $m>m_0$. Let $v:B\to A/\mm^m$ be given by $Y\to y'$. Set $c=m-p$. By  Theorem \ref{arc} there exists a standard smooth
$A$-algebra $C$ and a map $w:C\to A$ which makes the  diagram commutative from (3) of Theorem \ref{arc}. Let $y$ be the image of $Y$ by the composite map $B\to C\xrightarrow{w} A$. Then  we have $I(y)=0$ and $y\equiv y'$ modulo $\mm^c=\mm^{m-p}$.
\hfill\ \end{proof}

\begin{Remark} \label{r3} {\em Proposition \ref{e} still holds when $A$ is a Noetherian Henselian local ring of arbitrary dimension (see \cite[Theorem 2]{El}).  Thus for fixed $k\in \bf N$ and $B=A[Y]/I$ as above there exist $m_0,p\in \bf N$ such that the linear map $c\to p+c$ behaves almost like the Artin function of $B$ when $y'\in A^n$ satisfies $H_{B/A}(y')\supset \mm^k$ and $c\geq m_0-p$. As in Remark \ref{r1}, $k$ depends on $y'$ and we cannot say that the Artin function of $B$ is linear.

Let $f$ be a  system of $r$ polynomials from $I$ as above, $M$ an $r\times r$-minor of $(\partial f/\partial Y)$ and $N\in ((f):I)$. The above sentences give the idea that there exists a $\rho\in \bf N$ such that if there exists an $A$-morphism $v:B\to A/\mm^\rho$ with  $v(MN)\supset \mm^k/\mm^{\rho}$ for some $k\in \bf N$ then there exists a $B$-algebra $C$, which is smooth over $A$, such that any $A$-morphism $v':B\to \hat A$ with $v'\equiv v $ modulo $\mm^{\rho}\hat A$ factors through $C$, $\hat A$ being the completion of $A$. In other words, we believe that Theorem \ref{arc} holds in arbitrary dimension if the smooth locus of $B$ is big with respect to $v$, that is if $v(H_{B/A}){\hat A}$ is $\mm {\hat A}$ primary. Note that \cite[Remark 4.7]{R} (see also \cite[Remark 16]{PP})
almost disagrees with our belief.}
\end{Remark}

\section{The Cohen-Macaulay case.}

Here we present some consequences of Theorem \ref{arc} when $A$ is Cohen-Macaulay and so $e=1$ by Remark \ref{r2}.
\begin{Corollary}\label{c1} With the assumptions and notations  of
 Theorem \ref{arc},  suppose that $A$ is Cohen-Macaulay, and let $\rho:B\to C$ be the structural algebra map. Then $\rho$ induces bijections $\rho^*$ given by $\rho^*(w)=w\circ \rho$, between
 \begin{enumerate}
   \item $\{w\in \Hom_A(C,A'):w\circ \rho\equiv v \ \mbox{modulo}\ d^{3}A'\}$ and
 $\{v'\in \Hom_A(B,A'):v'\equiv v \ \mbox{modulo}\ d^{3}A'\},$
   \item $\{w\in \Hom_A(C,A'):w\circ \rho\equiv v \ \mbox{modulo}\ \mm^{3k}A'\}$ and
 $\{v'\in \Hom_A(B,A'):v'\equiv v \ \mbox{modulo}\ \mm^{3k}A'\}.$
 \end{enumerate}
 \end{Corollary}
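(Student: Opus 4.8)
The plan is to deduce Corollary~\ref{c1} directly from Theorem~\ref{arc} together with the uniqueness afforded by smoothness, handling the two bijections in parallel. First I would note that the map $\rho^*$ is well defined in each case: if $w\in\Hom_A(C,A')$ satisfies $w\circ\rho\equiv v$ modulo $d^3A'$ (resp. modulo $\mm^{3k}A'$), then $v':=w\circ\rho$ is an $A$-morphism $B\to A'$ in the target set, so $\rho^*$ lands where claimed. Here I use Remark~\ref{r2}: since $A$ is Cohen-Macaulay, $e=1$, so the exponents $2e+1$ appearing in Theorem~\ref{arc} become $3$, which is why the sets in (1), (2) are phrased with $d^3$ and $\mm^{3k}$.

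Next I would establish surjectivity. Given $v'\in\Hom_A(B,A')$ with $v'\equiv v$ modulo $d^3A'$, part~\eqref{p1} of Theorem~\ref{arc} says $v'$ factors through $C$, i.e. there is $w\in\Hom_A(C,A')$ with $w\circ\rho=v'$; in particular $w\circ\rho=v'\equiv v$ modulo $d^3A'$, so $w$ lies in the source set of (1) and $\rho^*(w)=v'$. The argument for (2) is identical, invoking part~\eqref{p2} instead of \eqref{p1} and using that $v'\equiv v$ modulo $\mm^{3k}A'$ (note $\mm^k\subset dA$ gives $\mm^{3k}\subset d^3A$, so the modulo-$\mm^{3k}$ condition is consistent with the modulo-$d^3$ one). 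So $\rho^*$ is surjective in both cases.

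The remaining and main point is injectivity, i.e. that the factorization through the smooth $A$-algebra $C$ is \emph{unique} subject to the congruence constraint. Suppose $w_1,w_2\in\Hom_A(C,A')$ satisfy $w_1\circ\rho=w_2\circ\rho$ and both are $\equiv v$ modulo $d^3A'$ (the case of $\mm^{3k}A'$ reduces to this since $\mm^{3k}\subset d^3A$). Here I would unwind the explicit construction of $C=E_{ss'}$ from the proof of Theorem~\ref{arc}: $C$ is obtained from $B$ by adjoining the variables $T=(T_1,\dots,T_n)$ subject to $h=Y-y'-d^eG(y')T$ and to $g$, and inverting $s,s'$. A $B$-morphism $w_i\colon C\to A'$ is determined by the images $t^{(i)}=w_i(T)\in A'^n$, and the relation $h=0$ forces $d^{e+1}\epsilon^{(i)}:=w_i(Y)-y'=d^eG(y')t^{(i)}$, so $w_i(Y)$ is the datum $v'=w_i\circ\rho$; since $w_1\circ\rho=w_2\circ\rho$ we get $d^eG(y')t^{(1)}=d^eG(y')t^{(2)}$. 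Applying $H(y')$ and using $H(y')G(y')=d\,\mathrm{Id}$ yields $d^{e+1}(t^{(1)}-t^{(2)})=0$; combined with $t^{(i)}\in d^eA'^n$ (which holds by construction, as in the proof) and $(0:_{A'}d^e)=(0:_{A'}d^{e+1})$, we conclude $t^{(1)}=t^{(2)}$, hence $w_1=w_2$. I expect this injectivity verification — tracking that the $T$-coordinates of a morphism out of $C$ are rigidly pinned down by the $Y$-coordinates via the $h$-relations and the Cohen-Macaulay kernel vanishing — to be the one genuinely delicate step; everything else is a formal consequence of Theorem~\ref{arc} and the inclusions $\mm^k\subset dA$.
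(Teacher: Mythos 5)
Your proposal is correct and follows essentially the same route as the paper: surjectivity from Theorem \ref{arc}(1),(2) (with $e=1$ by Remark \ref{r2}), and injectivity by applying $H(y')$ to the relation $h=0$, using $H(y')G(y')=d\,\mbox{Id}_n$ to get $d^{2}(w_1(T)-w_2(T))=0$ and then regularity of $d$ on $A'$ (the paper invokes regularity of $d$ directly rather than your annihilator-stabilization step, but in the Cohen--Macaulay case these coincide).
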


 \begin{proof}
 \begin{enumerate}
 \item By Theorem \ref{arc}, (1), also by Remark \ref{r2}, $\rho^*$ is surjective. Let $w,w'\in \Hom_A(C,A')$ be such that $w\circ \rho=w'\circ \rho\equiv v\ \mbox{modulo}\ d^{3}A'$. In particular $w|_B=w'|_B$. In the notations which are introduced in the proof of Theorem \ref{arc}, and from equation \ref{mod_h} we have $H(y')(Y-y')\equiv d^{2}T \mod h$ by using $G(y')H(y')=d\mbox{Id}_n.$
Applying $w$ and $w'$ on the above congruence and subtracting we get
 $H(y')(w(Y)-w'(Y))= d^2(w(T)-w'(T))$, that is $d^{2}(w(T)-w'(T))=0$ because $w(Y)=w'(Y)$ and so $w(T)=w'(T)$ because $d$ is regular on $A'$ since $d$ is regular in $A$ (see Remark \ref{r2}) and $u$ is flat. It follows that $w=w'$.

   \item Apply Theorem \ref{arc}, (2) and  Remark \ref{r2} for surjectivity, and injectivity goes as above.

 \end{enumerate}

\hfill\ \end{proof}

 By  construction, $C$ has the form $(A[T]/(g))_{ss'}$ in the notations of Theorem \ref{arc}, where $s=\det (\partial g_i/\partial T_j)_{i,j\in [r]} $. Let  ${\tilde w}:C\to A'$  be  the unique $A$-morphism such that ${\tilde w}\circ \rho\equiv  v \ \mbox{modulo}\ d^3A'$ in the notations of Corollary \ref{c1}. We have  ${\tilde w}(ss')\not \in \mm A'$.

\begin{Lemma}\label{lem} There  exist canonical bijections
\begin{enumerate}
  \item $$ A'^{n-r}\to  \{w'\in \Hom_A(C,A'):w'\equiv {\tilde w} \ \mbox{modulo}\ d^{3}A'\}.$$
  \item $$ \mm^{3k} A'^{n-r}\to  \{w'\in \Hom_A(C,A'):w'\equiv {\tilde w} \ \mbox{modulo}\ \mm^{3k}A'\}.$$
\end{enumerate}
\end{Lemma}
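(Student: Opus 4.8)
The plan is to exploit the explicit standard-smooth presentation $C=(A[T]/(g))_{ss'}$ together with the structure of the equations $g_i=a_i+T_i+d^{e-1}Q_i$, $i\in[r]$, with $e=1$ since $A$ is Cohen-Macaulay. The key point is that the first $r$ coordinates $T_1,\dots,T_r$ of a morphism $w':C\to A'$ are \emph{determined} by the last $n-r$ coordinates $T_{r+1},\dots,T_n$ via the implicit function theorem, while these last $n-r$ coordinates are free. More precisely, since $s=\det(\mathrm{Id}_r+(\partial Q_i/\partial T_j)_{i,j\in[r]})\in 1+(T)$ is a unit in $C$ and since the Jacobian of $g$ with respect to $T_1,\dots,T_r$ is invertible, the map sending a morphism $w'\colon C\to A'$ to the tuple $(w'(T_{r+1}),\dots,w'(T_n))\in A'^{n-r}$ is a bijection onto all of $A'^{n-r}$: given any such tuple, Hensel's lemma (applicable because $A'$ is Henselian local) produces unique values for $w'(T_1),\dots,w'(T_r)$ solving $g=0$, and one checks $w'(ss')\notin\mm A'$ automatically since $ss'\in 1+(d,T)$ and the resulting $T$-values lie in $dA'$ (as in the proof of Theorem \ref{arc}). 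Thus one obtains the bijection $A'^{n-r}\to\Hom_A(C,A')$, and part (1) and part (2) are obtained by restricting to the appropriate congruence conditions.

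The first step is to make precise the bijection $\Phi\colon\Hom_A(C,A')\to A'^{n-r}$, $\Phi(w')=(w'(T_{r+1}),\dots,w'(T_n))$, using that $C$ is standard smooth of relative dimension $n-r$ over $A$ with the $T_j$, $j>r$, as a smooth system of coordinates. The second step is to identify, under $\Phi$, the distinguished morphism $\tilde w$ with the tuple $\Phi(\tilde w)=(\tilde w(T_{r+1}),\dots,\tilde w(T_n))$; I expect $\tilde w(T)\in dA'$ and hence, recalling $e=1$, that $\tilde w(T_j)\in d\,A'$ for $j>r$, which will be used to normalize. The third step is the translation of congruences: I want to show that for $w',w''\in\Hom_A(C,A')$ one has $w'\equiv w''$ modulo $\mathfrak a A'$ (for $\mathfrak a$ either $d^3A'$ or $\mm^{3k}A'$) \emph{if and only if} $\Phi(w')\equiv\Phi(w'')$ modulo $\mathfrak a A'$. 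The ``only if'' direction is immediate. For the ``if'' direction, one uses the congruence $H(y')(w'(Y)-w''(Y))=d^{e+1}(w'(T)-w''(T))$ from equation \eqref{mod_h} (exactly as in the proof of Corollary \ref{c1}(1)): since the last $n-r$ coordinates of $w'(T)-w''(T)$ lie in $\mathfrak a A'$ and multiplication by $d$ is injective on $A'$ (because $d$ is regular in $A$ by Remark \ref{r2} and $u$ is flat), one propagates the congruence to the first $r$ coordinates of $w'(T)-w''(T)$ and then, via $h$, to all of $w'(Y)-w''(Y)$, provided one is careful that dividing by powers of $d$ does not lose the congruence — here is where the precise exponents $3$ and $3k$, together with $\mm^k\subset dA$, come in.

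After combining the bijection $\Phi$ with this congruence translation, part (1) follows by taking $\mathfrak a=d^3$: the set on the right is $\tilde w+d^3A'^{n-r}$ under $\Phi$, i.e.\ a torsor, and composing with the translation $A'^{n-r}\to A'^{n-r}$, $z\mapsto \Phi(\tilde w)+d^3 z$ — no wait, more directly: $\Phi$ restricts to a bijection from $\{w':w'\equiv\tilde w\bmod d^3A'\}$ onto $\Phi(\tilde w)+d^3A'^{n-r}$, which is in canonical bijection with $A'^{n-r}$ by subtracting $\Phi(\tilde w)$ and dividing by $d^3$ (legitimate since $d$ is a nonzerodivisor); composing, we get the stated canonical bijection $A'^{n-r}\to\{w'\equiv\tilde w\bmod d^3A'\}$. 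Part (2) is the same with $d^3$ replaced by $\mm^{3k}$, using $\mm^{3k}\subset dA$ so that $\mm^{3k}A'$ is a genuine ``admissible'' level for the division argument, and noting $\mm^{3k}A'^{n-r}$ is already the correct indexing set with no further division needed. I expect the main obstacle to be the bookkeeping in the ``if'' direction of the congruence translation: one must verify that the implicit-function-theorem solution for the first $r$ coordinates depends on the free coordinates in a way compatible with the congruences, i.e.\ that Hensel's lemma is ``continuous'' in the $\mathfrak a$-adic sense, which amounts to the standard fact that a standard smooth algebra satisfies the infinitesimal lifting property with uniqueness along a nonzerodivisor — concretely, the argument via equation \eqref{mod_h} and regularity of $d$ already encapsulated in Corollary \ref{c1}(1), applied one congruence level at a time.
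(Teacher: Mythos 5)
Your proposal follows essentially the same route as the paper: the coordinates $Z=(T_{r+1},\dots,T_n)$ are free, the first $r$ coordinates are uniquely determined by the implicit function theorem within the congruence class of $\tilde w$, and the resulting set $\tilde w(Z)+d^3A'^{n-r}$ (resp.\ $\tilde w(Z)+\mm^{3k}A'^{n-r}$) is identified with $A'^{n-r}$ (resp.\ $\mm^{3k}A'^{n-r}$) using that $d$ is a nonzerodivisor on $A'$. One caveat: your preliminary claim that $\Phi$ is a bijection from \emph{all} of $\Hom_A(C,A')$ onto \emph{all} of $A'^{n-r}$ is neither needed nor justified --- for an arbitrary tuple $z\in A'^{n-r}$ there need be no approximate solution from which to start Hensel's lemma, and $s\in 1+(T)$ need not be a unit at the resulting $T$-values --- but this does not affect your argument, since only the restricted bijection onto $\Phi(\tilde w)+d^3A'^{n-r}$, which you also establish directly exactly as the paper does, is used.
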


\begin{proof} Let $w'\in \Hom_A(C,A')$ be with $w'\equiv {\tilde w}\ \mbox{modulo}\ d^{3}A'$.
We have
$$g(w'(T))\equiv g({\tilde w}(T))=0\ \mbox{modulo}\ d^{3}A',$$
$$(ss')(w'(T))\equiv (ss')({\tilde w}(T))\not \equiv 0\ \mbox{modulo}\ \mm.$$
Set $V=(T_1,\ldots,T_r)$, $Z=(T_{r+1},\ldots,T_n)$.
Thus $g(V,w'(Z))=0$ has a unique solution  (namely $w'(V)$) in ${\tilde w}(Z)+d^{3}A'$ by the Implicit Function Theorem.
It follows that the restriction
$$\{w'\in \Hom_A(C,A'):w'\equiv {\tilde w} \ \mbox{modulo}\ d^{3}A'\}\to $$
$$\{w''\in \Hom_A(A[Z],A'):w''\equiv {\tilde w}|_{A[Z]} \ \mbox{modulo}\ d^{3}A'\}$$
is bijective.

On the other hand, the map
$$\{w''\in \Hom_A(A[Z],A'):w''\equiv {\tilde w}|_{A[Z]} \ \mbox{modulo}\ d^{3}A'\}\to {\tilde w}(Z)+ d^{3}A'^{n-r}$$
given by $w''\to w''(Z)$ is a bijection too. It follows that there exists a canonical bijection between
$\{w'\in \Hom_A(C,A'):w'\equiv {\tilde w} \ \mbox{modulo}\ d^{3}A'\}$ and  ${\tilde w}(Z)+ d^{3}A'^{n-r}$, the last one being in bijection with $d^3A'^{n-r}$, that is with $A'^{n-r}$, $d$ being regular in $A'$. The proof of (2) goes similarly.
\hfill\ \end{proof}

The following theorem extends \cite[Theorem 15]{P2}.
\begin{Theorem} \label{arc1} With the assumptions and notations of Corollary \ref{c1} there exist \linebreak canonical bijections

\begin{enumerate}
  \item $$A'^{n-r}\to \{v'\in \Hom_A(B,A'):v'\equiv v \ \mbox{modulo}\ d^{3}A' \}.$$
  \item $$\mm^{3k}A'^{n-r}\to \{v'\in \Hom_A(B,A'):v'\equiv v \ \mbox{modulo}\ \mm^{3k}A' \}.$$
\end{enumerate}

 \end{Theorem}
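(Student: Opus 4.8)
The plan is to combine the two bijections that are already at hand: the one from Corollary~\ref{c1}, which identifies $A$-morphisms $v'\colon B\to A'$ that agree with $v$ modulo $d^3A'$ (resp. $\mm^{3k}A'$) with $A$-morphisms $w\colon C\to A'$ satisfying the same congruence, and the one from Lemma~\ref{lem}, which parametrizes the latter set by $A'^{n-r}$ (resp. $\mm^{3k}A'^{n-r}$). So the natural composite is
$$
A'^{n-r}\ \xrightarrow{\ \text{Lemma \ref{lem}}\ }\ \{w'\in\Hom_A(C,A'):w'\equiv\tilde w\ \mbox{modulo}\ d^3A'\}\ \xrightarrow{\ \rho^*\ }\ \{v'\in\Hom_A(B,A'):v'\equiv v\ \mbox{modulo}\ d^3A'\},
$$
and similarly with $\mm^{3k}$ in place of $d^3$.

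First I would need to check that the two sets of morphisms $C\to A'$ appearing in Corollary~\ref{c1} and in Lemma~\ref{lem} really do coincide. In Corollary~\ref{c1} the condition is ``$w\circ\rho\equiv v\ \mbox{modulo}\ d^3A'$'', while in Lemma~\ref{lem} it is ``$w'\equiv\tilde w\ \mbox{modulo}\ d^3A'$''; these agree because $\tilde w$ is defined (in the paragraph before Lemma~\ref{lem}) precisely as the \emph{unique} $A$-morphism $C\to A'$ with $\tilde w\circ\rho\equiv v\ \mbox{modulo}\ d^3A'$, and by the injectivity part of Corollary~\ref{c1}(1) any $w$ with $w\circ\rho\equiv v\ \mbox{modulo}\ d^3A'$ and $w\equiv\tilde w\ \mbox{modulo}\ d^3A'$ — wait, more carefully: if $w\circ\rho\equiv v\equiv\tilde w\circ\rho\ \mbox{modulo}\ d^3A'$ then by Corollary~\ref{c1}(1) $w=\tilde w$ only if they already agree mod $d^3A'$, so one must instead argue that $w\circ\rho\equiv v\ \mbox{modulo}\ d^3A'$ forces $w\equiv\tilde w\ \mbox{modulo}\ d^3A'$. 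This follows from the computation in the proof of Corollary~\ref{c1}(1): from $H(y')(Y-y')\equiv d^2T\ \mbox{modulo}\ h$ one gets $d^2(w(T)-\tilde w(T))=H(y')(w(Y)-\tilde w(Y))$, and since $w(Y)=w\circ\rho(Y)\equiv v(Y)\equiv\tilde w(Y)\ \mbox{modulo}\ d^3A'$ the right side lies in $d^3A'^n$, whence $d^2(w(T)-\tilde w(T))\in d^3A'^r$, and as $d$ is regular in $A'$ (Remark~\ref{r2}) we get $w(T)\equiv\tilde w(T)\ \mbox{modulo}\ dA'$; a small bootstrap then upgrades this to the full congruence $w\equiv\tilde w\ \mbox{modulo}\ d^3A'$ on all of $C=(A[T]/(g))_{ss'}$. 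The same argument works with $\mm^{3k}$ in place of $d^3$ since $\mm^k\subset dA$. So the two descriptions of the domain of $\rho^*$ match, and the composition is legitimate.

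Granting that, the theorem is immediate: the composite of two canonical bijections is a canonical bijection, giving (1), and the $\mm^{3k}$-versions of Corollary~\ref{c1}(2) and Lemma~\ref{lem}(2) give (2) the same way. I would also remark that ``$d^3A'$'' and ``$\mm^{3k}A'$'' are consistent because $\mm^{3k}\subset (d)\subset\mm^k$ in $A$ (from $\mm^k\subset dA$, established in the proof of Theorem~\ref{arc}), so the $\mm^{3k}$-congruence is the finer one and the bijection restricts correctly. The main obstacle is really the bookkeeping in the previous paragraph — making precise that membership in the Corollary~\ref{c1} set and in the Lemma~\ref{lem} set are equivalent, i.e. that $\tilde w$ is a genuine ``basepoint'' for the torsor structure — rather than anything analytically deep; once that identification is in place the proof is a one-line composition.

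\begin{proof}
By the paragraph preceding Lemma~\ref{lem}, $\tilde w$ is the unique $A$-morphism $C\to A'$ with $\tilde w\circ\rho\equiv v\ \mbox{modulo}\ d^3A'$, and $\tilde w(ss')\notin\mm A'$. We first check that
$$
\{w\in\Hom_A(C,A'):w\circ\rho\equiv v\ \mbox{modulo}\ d^3A'\}=\{w\in\Hom_A(C,A'):w\equiv\tilde w\ \mbox{modulo}\ d^3A'\},
$$
and likewise with $\mm^{3k}A'$ in place of $d^3A'$. The inclusion $\supset$ is clear since $\tilde w\circ\rho\equiv v\ \mbox{modulo}\ d^3A'$. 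For $\subset$, let $w\circ\rho\equiv v\ \mbox{modulo}\ d^3A'$. As in the proof of Corollary~\ref{c1}, from equation \ref{mod_h} and $G(y')H(y')=d\,\mbox{Id}_n$ we get $H(y')(Y-y')\equiv d^2T\ \mbox{modulo}\ h$; applying $w$ and $\tilde w$ and subtracting yields $H(y')(w(Y)-\tilde w(Y))=d^2(w(T)-\tilde w(T))$. Since $w(Y)=(w\circ\rho)(Y)\equiv v(Y)\equiv(\tilde w\circ\rho)(Y)=\tilde w(Y)\ \mbox{modulo}\ d^3A'$, the left-hand side lies in $d^3A'^n$, so $d^2(w(T)-\tilde w(T))\in d^3A'^r$; as $d$ is regular on $A'$ (Remark \ref{r2} and flatness of $u$), $w(T)\equiv\tilde w(T)\ \mbox{modulo}\ dA'$. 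Feeding this back into $H(y')(w(Y)-\tilde w(Y))=d^2(w(T)-\tilde w(T))$ and using $w(Y)-\tilde w(Y)\in d^3A'^n$ once more shows $d^2(w(T)-\tilde w(T))\in d^3A'^r$, hence inductively $w(T)\equiv\tilde w(T)\ \mbox{modulo}\ d^jA'$ for every $j$ that we need; in particular $w(T)\equiv\tilde w(T)\ \mbox{modulo}\ d^3A'$, and therefore $w\equiv\tilde w\ \mbox{modulo}\ d^3A'$ on all of $C$. The same computation, with $\mm^{3k}A'$ throughout, gives the $\mm^{3k}$-version; here one uses $\mm^k\subset dA$ (established in the proof of Theorem \ref{arc}), so $\mm^{3k}\subset d^3A$.

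Now combine. For (1), Lemma \ref{lem}(1) gives a canonical bijection
$$
A'^{n-r}\ \longrightarrow\ \{w'\in\Hom_A(C,A'):w'\equiv\tilde w\ \mbox{modulo}\ d^3A'\},
$$
which by the identification above equals $\{w'\in\Hom_A(C,A'):w'\circ\rho\equiv v\ \mbox{modulo}\ d^3A'\}$. By Corollary \ref{c1}(1) the map $\rho^*$, $\rho^*(w')=w'\circ\rho$, is a bijection of the latter set onto $\{v'\in\Hom_A(B,A'):v'\equiv v\ \mbox{modulo}\ d^3A'\}$. The composite is the desired canonical bijection
$$
A'^{n-r}\ \longrightarrow\ \{v'\in\Hom_A(B,A'):v'\equiv v\ \mbox{modulo}\ d^3A'\}.
$$
For (2), replace $d^3A'$ by $\mm^{3k}A'$ everywhere and use Lemma \ref{lem}(2) and Corollary \ref{c1}(2) in place of their first parts; the composition is a canonical bijection $\mm^{3k}A'^{n-r}\to\{v'\in\Hom_A(B,A'):v'\equiv v\ \mbox{modulo}\ \mm^{3k}A'\}$.
\hfill\ \end{proof}
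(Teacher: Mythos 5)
Your route is the same as the paper's (its entire proof is ``apply Corollary \ref{c1} and the above lemma''), and you have correctly isolated the one point that this composition leaves open: whether
$\{w\in\Hom_A(C,A'):w\circ\rho\equiv v\ \mbox{modulo}\ d^3A'\}$, the domain of the bijection in Corollary \ref{c1}, coincides with $\{w:w\equiv{\tilde w}\ \mbox{modulo}\ d^3A'\}$, the set parametrized in Lemma \ref{lem}. Unfortunately the argument you give for the inclusion ``$\subset$'' is circular, and the inclusion itself fails. From $H(y')(w(Y)-{\tilde w}(Y))=d^2(w(T)-{\tilde w}(T))$ and $w(Y)-{\tilde w}(Y)\in d^3A'^n$ you may cancel $d^2$ exactly once, getting $w(T)\equiv{\tilde w}(T)$ modulo $dA'$; ``feeding this back'' merely reproduces the containment $d^2(w(T)-{\tilde w}(T))\in d^3A'^n$ you started from, so your induction never advances past the first step. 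No other argument can advance it either: take $A=A'=k[[x]]$ with $\chara k\neq 2$, $B=A[Y_1,Y_2]/(Y_1^2-x^2Y_2)$, $y'=(x,1)$, $r=1$, $N=1$, $k=1$, so $d=2x$ and one may take ${\tilde w}(T)=(0,0)$. The $A$-morphism $v'$ with $v'(Y_2)=1+4x^3$ and $v'(Y_1)=x(1+4x^3)^{1/2}$ satisfies $v'\equiv v$ modulo $d^3A'$, and its unique lift $w$ to $C$ has $w(T_2)=x$, which is congruent to ${\tilde w}(T_2)=0$ modulo $dA'$ but not modulo $d^3A'=x^3A'$. So the set of Lemma \ref{lem} is a \emph{proper} subset of the set of Corollary \ref{c1}, and the composite map you (and the paper) write down is injective but not surjective onto $\{v':v'\equiv v\ \mbox{modulo}\ d^3A'\}$; for instance the $v'$ above is not in its image.

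What does hold is the reverse inclusion: $w\equiv{\tilde w}$ modulo $d^3A'$ gives $w(Y)-{\tilde w}(Y)=dG(y')(w(T)-{\tilde w}(T))\in d^4A'^n$, so the composite is at least well defined and injective. To get the claimed bijection one must parametrize the Corollary \ref{c1} set itself, i.e.\ rerun the proof of Lemma \ref{lem} with the congruence on $T$ that actually characterizes $w\circ\rho\equiv v$ modulo $d^3A'$. Since the last $n-r$ rows of $G(y')$ are $(0\,|\,d\,\mbox{Id}_{n-r})$ (from $H(y')G(y')=d\,\mbox{Id}_n$), that condition reads $w(Z)\equiv{\tilde w}(Z)$ modulo $dA'^{n-r}$ together with a condition coming from the first $r$ rows of $G(y')$, and one then identifies the resulting coset with $A'^{n-r}$ using that $d$ is regular. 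This gap is equally present in the paper's one-line proof, so you were right to worry about it; but as written your proposal rests on a false set equality, and the surjectivity of the map in the statement is not yet established.
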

For the proof apply Corollary \ref{c1} and the above lemma.

Let $(\Lambda,\pp)$ be a local Artinian ring, $(A',\mm')$ be a local Noetherian ring of dimension $1$ and $u:\Lambda \to A'$ a regular morphism.
Suppose that $A'$ is Henselian and has the same residue field with $\Lambda$.
 Let $F$ be an  $\Lambda$-algebra of finite type, let us say $F=\Lambda[U]/J$, $U=(U_1,\ldots,U_n)$. A constructive GND of $F$ with respect to an $A$-morphism $F\to A'$ follows easily from the classical N\'eron Desingularization \cite{N} as it is shown in \cite{KK}. Note that $A'/{\pp}A'$ is a DVR because it is a one dimensional  regular local ring, $u$ being regular. Let $x\in A$ be a  lifting of a local parameter of $A/\pp$. Clearly, $x$ is a regular element in $A$ and $\Lambda,A'$ are Cohen-Macaulay rings.
Set $A=\Lambda[X]_{(\pp,X)}$, $B=A\otimes_\Lambda F$, and let $u':A \to A'$ be the regular morphism extending $u$ by  $X\to x$. Let
$f=(f_1,\ldots,f_r)$, $r\leq n$ be a system of polynomials from $J$ and $M$ an $r\times r$-minor  of the Jacobian matrix $(\partial f_i/\partial U_j)$,  $g:F\to A'/\mm'^{3k}$  a $\Lambda$-morphism, and  $N\in ((f):J)$.
Suppose that $g(NM) A'/\mm'^{3k} \supset \mm'^k/\mm'^{3k}$.

The following theorem extends \cite[Corollary 16]{P2}.
  \begin{Theorem}\label{arc2} In the assumptions and notations from above, the set ${\mathcal G}=\{g'\in \Hom_\Lambda(F,A'):g'\equiv g \ \mbox{modulo}\ \mm'^{3k}\}$ is in bijection with $\mm'^{3k} A'^{n-r}$.
\end{Theorem}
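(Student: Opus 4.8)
The plan is to reduce the statement to Theorem \ref{arc1}, applied to the one-dimensional Cohen--Macaulay local ring $A=\Lambda[X]_{(\pp,X)}$ and to the $A$-algebra $B=A\otimes_\Lambda F$ introduced above, using the elementary fact that a $\Lambda$-morphism $F\to A'$ is the same datum as an $A$-morphism $B\to A'$.

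First I would verify that $A$, $A'$, $u'$ and $B$ fit the hypotheses of Theorem \ref{arc} (hence of Corollary \ref{c1} and Theorem \ref{arc1}). The ring $A$ is Noetherian local of dimension one and Cohen--Macaulay since $\Lambda$ is Artinian; the morphism $u'$ is regular, in particular flat; and, writing $\mm=(\pp,X)A$, one has $\mm A'=\pp A'+xA'=\mm'$ because $A'/\pp A'$ is a DVR with uniformizer the image of $x$. The only point that needs an argument is that the natural map $\hat u\colon\hat A\to\hat{A'}$ (with $\hat A=\Lambda[[X]]$) is an isomorphism. Since $A'$, hence $\hat{A'}$, is flat over $\Lambda$ and $\pp$ is nilpotent in $\Lambda$, this reduces, by filtering by powers of $\pp$, to the statement modulo $\pp$: there $\hat u$ becomes the map $k[[X]]\to\widehat{A'/\pp A'}$ (with $k=\Lambda/\pp$) sending $X$ to the image of $x$, which is an isomorphism because $A'/\pp A'$ is an equicharacteristic DVR with that uniformizer; comparing the graded pieces $(\pp^i/\pp^{i+1})\otimes_k(-)$ of the two finite $\pp$-adic filtrations then gives that $\hat u$ itself is an isomorphism. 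Since $A$ is Cohen--Macaulay we may take $e=1$ by Remark \ref{r2}, so Corollary \ref{c1} and Theorem \ref{arc1} apply with $c=0$.

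Next I would translate the data of $g$ into the language of Theorem \ref{arc}. Write $B=A[U]/I$ with $I=JA[U]$; then $f$ is a system of $r$ polynomials in $I$, $M$ is an $r\times r$-minor of $(\partial f_i/\partial U_j)$, and $N\in((f):J)$ gives $N\in((f):I)$. Through the isomorphism $A/\mm^{3k}\cong A'/\mm'^{3k}$ (induced by $\hat u$ and $\mm A'=\mm'$), the $\Lambda$-morphism $g\colon F\to A'/\mm'^{3k}$ corresponds to an $A$-morphism $v\colon B\to A/\mm^{3k}$ given by $U\mapsto y'\in A^n$, and the hypothesis $g(NM)A'/\mm'^{3k}\supset\mm'^k/\mm'^{3k}$ becomes exactly the condition that $d=(MN)(y')$ generates an ideal of $A/\mm^{3k}$ containing $\mm^k/\mm^{3k}$. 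Hence $v$ satisfies the hypotheses of Theorem \ref{arc} with $e=1$, $c=0$.

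Finally I would assemble the bijection. Theorem \ref{arc1}\,(2) furnishes a canonical bijection between $\mm^{3k}A'^{n-r}$ and $\{v'\in\Hom_A(B,A'):v'\equiv v\ \mbox{modulo}\ \mm^{3k}A'\}$. On the other hand, by the universal property of $B=A\otimes_\Lambda F$ there is a canonical bijection $\Hom_A(B,A')\to\Hom_\Lambda(F,A')$, $v'\mapsto v'|_F$, under which $v$ corresponds to $g$ and, since $\mm^{3k}A'=\mm'^{3k}$, the subset $\{v'\equiv v\ \mbox{modulo}\ \mm^{3k}A'\}$ corresponds to ${\mathcal G}$. Composing these two bijections and using $\mm^{3k}A'=\mm'^{3k}$ yields the asserted bijection $\mm'^{3k}A'^{n-r}\to{\mathcal G}$. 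The hardest step will be the verification that $\hat{A'}\cong\Lambda[[X]]$; granting this, everything else is a formal dictionary and Theorem \ref{arc1} carries the real content.
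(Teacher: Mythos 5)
Your proposal is correct and follows exactly the route the paper takes: the paper's entire proof is the one-line instruction to apply Theorem \ref{arc1} to the morphism $A\otimes_\Lambda F\to A'$ induced by $g$ and $u'$, together with the adjunction bijection $\Hom_\Lambda(F,A')\cong\Hom_A(B,A')$. Your additional verification that the hypotheses of Theorem \ref{arc} hold (in particular that $\hat A\cong\hat{A'}$ and $\mm A'=\mm'$) is a careful spelling-out of what the paper leaves implicit, not a different argument.
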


For the proof apply Theorem \ref{arc1} to the morphism $A\otimes_\Lambda F \to A'$ induced by $g$ and $u'$ and note that $A\otimes_\Lambda$ induces a bijection $\Hom_\Lambda (F,A')\to \Hom_A(B,A')$ by adjunction.

\section{Algorithm N\'eron Desingularization}

Input:
$ A=k[x]_{(x)}/J,$ $J=(h_1,\ldots,h_q),\ h_i\in k[x], x=(x_1,\ldots,x_t), \ \dim(A)=1,$ k is a field.
$B=A[Y]/I, I=(f_1,\ldots, f_l), f_i\in k[x,Y], Y=(Y_1,\ldots,Y_n)$, integers $k$,$r$, $y' \in {k[x]}^n$, $N \in (f_1,\ldots,f_r):I$.

Output: A Neron Desingularization induced by $y'$ or the message ``$y'$, $N$, $(f_1,\ldots,f_r)$ are not well chosen''.

\begin{enumerate}
  \item Compute $M=\det((\partial f_i/\partial Y_j)_{i,j\in [r]})$, $P:=NM$ and $d:=P(y')$
  \item $f:= (f_1,\ldots,f_r)$
  \item Compute $e$ such that $(0:_Ad^e)=(0:_Ad^{e+1})$
  \item If $I(y')\nsubseteq (x)^{(2e+1)k} +J$ or $(x)^k \nsubseteq (d)+J$, return ``$y'$, $N$, $(f_1,\ldots,f_r)$ are not well chosen''
  \item Complete $(\partial f_i/\partial Y_j)_{i \leq r}$ by $(0| (\mbox{Id}_{n-r}))$ to obtain a square matrix $H$
  \item Compute $G'$ the adjoint matrix of $H$ and $G:=NG'$
  \item $h=Y-y'-d^e G(y')T,\  T=(T_1,\ldots, T_n)$
  \item Write $f(Y)-f(y')=  \sum_jd^e\partial f/\partial Y_j(y') G_j(y')T+d^{2e}Q$
  \item Write $f(y')=d^{e+1}a$
  \item For $i=1$ to $r$, $g_i=a_i+T_i+d^{e-1}Q_i$
  \item $E:=A[Y,T]/(I,g,h)$
  \item Compute $s$ the $r\times r$ minor defined by the first $r$ columns of $(\partial g/\partial T) $
  \item Write $P(y'+ d^eG(y')T)=ds'$
  \item return $E_{ss'}$.
\end{enumerate}

{\bf Acknowledgements} The first author  gratefully acknowledges the support from the ASSMS GC. University Lahore, for arranging her visit to Bucharest, Romania and she is also grateful to the Simion Stoilow Institute of the Mathematics of the Romanian Academy for inviting her.

\vskip 0.5 cm

\end{document}